 \newtheorem{thm}{Theorem}[section]
 \newtheorem{cor}[thm]{Corollary}
 \theoremstyle{definition}
 \theoremstyle{remark}
 \newtheorem{ex}[thm]{Example}
 \numberwithin{equation}{section}
\newcommand{\A}{X}
\newcommand{\B}{Y}
\newcommand{\E}{P}
\newcommand{\F}{Q}
\newcommand{\R}{\mathcal{R}}
\newcommand{\M}{\mathcal{M}}
\newcommand{\T}{\mathcal{T}}
\newcommand{\Ss}{\mathcal{S}}
\newcommand{\PH}{\varphi:\mathcal{T}\rightarrow \mathcal{T}}
\newcommand{\de}{d:\mathcal{T}\rightarrow \mathcal{T}}
\newcommand{\del}{\delta:\mathcal{T}\rightarrow \mathcal{T}}
\begin{document}

\title[CHARACTERIZING JORDAN CENTRALIZERS AND JORDAN GENERALIZED DERIVATIONS ...]
 {CHARACTERIZING JORDAN CENTRALIZERS AND JORDAN GENERALIZED DERIVATIONS ON TRIANGULAR RINGS THROUGH ZERO PRODUCTS}

\author{ Hoger Ghahramani}

\thanks{{\scriptsize
\hskip -0.4 true cm \emph{MSC(2010)}: 16W25; 15A78; 47B47; 47B49.
\newline \emph{Keywords}: Jordan centralizer; Jordan generalized
derivation; triangular ring.\\}}

\address{Department of
Mathematics, University of Kurdistan, P. O. Box 416, Sanandaj,
Iran.}

\email{h.ghahramani@uok.ac.ir; hoger.ghahramani@yahoo.com}

\address{}

\email{}

\thanks{}

\thanks{}

\subjclass{}

\keywords{}

\date{}

\dedicatory{}

\commby{}

%%% ----------------------------------------------------------------------

\begin{abstract}
Let $\T$ be a $2$-torsion free triangular ring and let
$\varphi:\T\rightarrow \T$ be an additive map. We prove that if
$\A \varphi(\B)+\varphi(\B)\A=0$ whenever $\A,\B\in \T$ are such
that $\A\B=\B\A=0$, then $\varphi$ is a centralizer. It is also
shown that if $\tau:\T\rightarrow \T$ is an additive map
satisfying $\label{t2} X,Y\in \T, \quad XY=YX=0\Rightarrow X
\tau(Y)+\delta(X)Y+Y\delta(X)+\tau(Y)X=0$, where
$\delta:\T\rightarrow \T $ is an additive map satisfies $X,Y\in
\T, \quad XY=YX=0\Rightarrow X
\delta(Y)+\delta(X)Y+Y\delta(X)+\delta(Y)X=0$, then
$\tau(\A)=d(\A)+\A \tau(\textbf{1})$, where $d:\T\rightarrow \T$
is a derivation and $\tau(\textbf{1})$ lies in the centre of the
$\T$. By applying this results we obtain some corollaries
concerning (Jordan) centralizers and (Jordan) derivations on
triangular rings.
\end{abstract}

%%% ----------------------------------------------------------------------
\maketitle
%%% ----------------------------------------------------------------------

\section{Introduction}
Throughout this paper all rings are associative. Let $ \R$ be a
ring with centre $Z(\R) $. Recall that an additive map
$\varphi:\R \rightarrow \R$ is said to be a \emph{centralizer} if
$\varphi(xy)=x\varphi(y)=\varphi(x)y$ for each $x,y\in \R$. In
case $\R$ has a unity $1$, $\varphi$ is a centralizer if and only
if $\varphi(x)=\varphi(1)x$ for each $x\in \R$, where $
\varphi(1)\in Z(\R)$. We say that $\varphi$ is a \emph{Jordan
centralizer} if $\varphi(xy+yx)=x\varphi(y)+\varphi(y)x$ for all
$x,y\in \R$. Clearly, each centralizer is a Jordan centralizer.
The converse is, in general, not true (see \cite{Gha}, Example
2.6).
\par
In general, the question under what conditions that a map becomes
a centralizer attracted much attention of mathematicians. Vukman
\cite{Vuk} has showed that an additive map
$\varphi:\mathcal{R}\rightarrow \mathcal{R}$, where $\mathcal{R}$
is a $2$-torsion free semiprime ring, with the property that
$2\varphi(x^{2})=x\varphi(x)+\varphi(x)x$ for all $x\in \R$, is a
centralizer. Hence any Jordan centralizer on a $2$-torsion free
semiprime ring is a centralizer. Benkovi$\breve{c}$ et al.
\cite{Ben} have proved that if there exists an additive mapping
$\varphi:\mathcal{R}\rightarrow \mathcal{R}$, where $\R$ is a
prime ring with suitable characteristic restrictions, satisfying
the relation $2\varphi(x^{n+1}) = \varphi(x)x^{n}+x^{n}\varphi(x)$
for all $x\in \R$ and some fixed integer $n$, then $ \varphi$ is a
centralizer. Vukman \cite{Vuk2} has showed the following result.
If $\varphi:\R \rightarrow \R $ is an additive mapping, where $\R$
is a 2-torsion free semiprime ring, satisfying the relation
$\varphi(xyx) = x\varphi(y)x$ for all pairs $x\in \R$, then
$\varphi$ is a centralizer. In \cite{Gha} the author study
continuous linear maps behaving like Jordan centralizers when
acting on unit-product elements on Banach algebras, that is, a map
$\varphi:\mathcal{A}\rightarrow \mathcal{A} $ satisfying $a,b\in
\mathcal{A}, \quad ab=ba=1 \Rightarrow
a\varphi(b)+\varphi(b)a=2\varphi(1) $, where $ \mathcal{A}$ is
unital Banach algebra. For results concerning centralizers on
rings and algebras we refer to \cite{Gha, Guo} where further
references can be found.
\par
In this paper, Motivated by \cite{Gha}, we consider the subsequent
condition on an additive map $\varphi:\T \rightarrow \T $:
\begin{equation}\label{t1}
X,Y \in \T, \quad XY=YX=0\Rightarrow X\varphi(Y)+\varphi(Y)X=0,
\end{equation}
where $\T$ is a triangular ring.
\par
Let $\R$ be a unital ring. Recall that an additive map
$\delta:\mathcal{R}\rightarrow \mathcal{R}$ is said to be a
\emph{Jordan derivation} (or \emph{generalized Jordan
derivation}) if
$\delta(xy+yx)=\delta(x)y+x\delta(y)+\delta(y)x+y\delta(x)$ (or
$\delta(xy+yx)=\delta(x)y+x\delta(y)+\delta(y)x+y\delta(x)-x\delta(1)y-y\delta(1)x$)
for all $x,y\in \mathcal{R}$. It is called a \emph{derivation} (or
\emph{generalized derivation}) if
$\delta(xy)=\delta(x)y+x\delta(y)$ (or
$\delta(xy)=\delta(x)y+x\delta(y)-x\delta(1)y$) for all $x,y\in
\mathcal{R}$. Clearly, each (generalized) derivation is a
(generalized) Jordan derivation. The converse is, in general, not
true.
\par
The question under what conditions that a map becomes a
(generalized) derivation or (generalized) Jordan derivation
attracted much attention of mathematicians. Herstein\cite{Her}
proved that every Jordan derivation from a $2$-torsion free prime
ring into itself is a derivation. Bre$\check{\textrm{s}}$ar
\cite{Bre} showed that every Jordan derivation from a $2$-torsion
free semiprime ring into itself is a derivation. By a classical
result of Jacobson and Rickart \cite{Jac} every Jordan derivation
on a full matrix ring over a $2$-torsion free unital ring is a
derivation. Benkovi$\check{\textrm{c}}$ \cite{Ben0} determined
Jordan derivations on triangular matrices over commutative rings
and proved that every Jordan derivation from the algebra of all
upper triangular matrices into its arbitrary bimodule is the sum
of a derivation and an antiderivation. Zhang and Yu \cite{Zh2}
showed that every Jordan derivation of triangular algebras is a
derivation, so every Jordan derivation from the algebra of all
upper triangular matrices into itself is a derivation. For more
studies concerning Jordan derivations we refer the reader to
\cite{Gh0, Gh} and the references therein. Recently, there have
been a number of papers on the study of conditions under which
(generalized) derivation or (generalized) Jordan derivation of
rings or algebras can be completely determined by the action on
some elements concerning products. For instance, see \cite{Gha0,
Gh} and the references therein.
\par
Motivated by \cite{Li}, we will call an additive map
$\tau:\R\rightarrow \R$ a \emph{Jordan generalized derivation via
a Jordan derivation $\delta$} if there exists a Jordan derivation
$\delta:\R\rightarrow \R$ such that
$\tau(xy+yx)=x\tau(y)+\delta(x)y+\tau(y)x+y\delta(x)$ for all
$x,y\in \mathcal{R}$. Obviously, the definition of a generalized
Jordan derivation is generally not equivalent to that of Jordan
generalized derivation. Each Jordan derivation is a Jordan
generalized derivation and any generalized derivation is a
generalized Jordan derivation, but generalized derivations are
not necessarily Jordan generalized derivations (see
Example~\ref{ee}).
\par
In this article, we also consider the following conditions on an
additive map $\tau: \T\rightarrow \T$:
\begin{equation}\label{t2}
X,Y\in \T, \quad XY=YX=0\Rightarrow X
\tau(Y)+\delta(X)Y+Y\delta(X)+\tau(Y)X=0,
\end{equation}
where $\T$ is a triangular ring and $\delta:\T\rightarrow \T $ is
an additive map satisfying
\[X,Y\in \T, \quad XY=YX=0\Rightarrow X
\delta(Y)+\delta(X)Y+Y\delta(X)+\delta(Y)X=0.\] This article is
organized as follows. Suppose that $\T$ is a 2-torsion free
triangular ring with unity matrix $ \textbf{1}$. In Section 2, we
give some preliminaries. Section 3 is devoted to characterizing
the Jordan centralizers by acting on zero products on triangular
rings. Indeed, we show that each additive map $ \varphi$ on $\T$
satisfying \eqref{t1} is a centralizer. Then by applying this
result we see that each Jordan centralizer on $\T$ is a
centralizer. Also we obtain that if $\varphi:\T\rightarrow \T$ is
an additive map satisfying $\varphi(XYX)=X\varphi(Y)X$ for all
$X,Y\in \T$, then $\varphi$ is a centralizer. In Section 4 we
prove that condition \eqref{t2} imply $\tau$ is of the form
$\tau(\A)=d(\A)+\A \tau(\textbf{1})$ for each $\A\in \T$, where
$d: \T\rightarrow \T$ is a derivation, $\tau(\textbf{1})\in
Z(\T)$. As applications of the this result, we show that every
Jordan derivation of the trivial extension of $\T$ by $\T$ is a
derivation.
%******************************************************************************************************
\section{Preliminaries}
Recall that a \emph{triangular ring} $Tri(\R,\M,\Ss)$ is a ring of
the form
\[ Tri(\R,\M,\Ss):=\bigg \{ \begin{pmatrix}
  r & m \\
  0 & s
\end{pmatrix}\,\bigg | \, r \in \R,\, s\in \Ss, \, m\in \M\bigg \} \]
under the usual matrix operations, where $\R$ and $\Ss$ are unital
rings and $\M$ is a unital $(\R, \Ss)$-bimodule which is faithful
as a left $\R$-module as well as a right $\Ss$-module. The most
important examples of triangular rings are upper triangular
matrices over a ring $\mathcal{R}$, block upper triangular matrix
algebras, nest algebras over a real or a complex Banach space
$\mathcal{X}$ or a Hilbert space $\mathcal{H}$, respectively and
generalized triangular matrix algebras. Recently, there has been a
growing interest in the study of linear maps that preserve zero
products, Jordan products, commutativity, etc. and derivable
(resp., Jordan derivable, Lie derivable) maps at zero point,
etc., on triangular rings (algebras). For instance, see
\cite{Gha1} and the references therein.
\par
Throughout this paper $\R$ and $\Ss$ are unital 2-torsion free
rings, and $\M$ is a unital 2-torsion free $(\R,\Ss)$-bimodule,
which is faithful as a left $\R$-module and also as a right
$\Ss$-module. Also $\T$ denotes the triangular ring
$Tri(\R,\M,\Ss)$ which is a 2-torsion free ring. Let $1_{\R}$ and
$1_{Ss}$ be identities of the rings $\R$ and $\Ss$, respectively.
We denote the identity of the triangular ring $\T$, i.e. the
identity matrix $\begin{pmatrix}
  1_{\R} & 0 \\
  0 & 1_{\Ss}
\end{pmatrix}$ by $\textbf{1}$. Also, throughout this paper we shall use following notation
\[ P=\begin{pmatrix}
  1_{\R} & 0 \\
  0 & 0
\end{pmatrix} \quad and \quad Q=\begin{pmatrix}
 0 & 0 \\
 0 & 1_{\Ss}
\end{pmatrix}. \]
We immediately notice that $P$ and $Q$ are the standard
idempotents (i.e. $P^{2}=P$ and $Q^{2}=Q$) in $\T$ such that
$P+Q=\textbf{1}$ and $PQ=QP=0$.

%%% ----------------------------------------------------------------------
\section{Characterizing Jordan centralizers through zero products}
In this section, we consider the question of characterizing
Jordan centralizers by action at zero products on triangular
rings. The results in this section are also basic to discuss the
additive maps Jordan generalized derivable through zero products.
\begin{thm}\label{cc}
Let $\PH$ be an additive map satisfying
\[ X,Y\in \T, \quad XY=YX=0\Rightarrow X \varphi(Y)+\varphi(Y)X=0.\]
Then $\varphi$ is a centralizer.
\end{thm}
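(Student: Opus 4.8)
The plan is to work entirely in the Peirce decomposition $\T = P\T P \oplus P\T Q \oplus Q\T Q$ (recall that $Q\T P=0$ since $\T$ is triangular), writing each image $\varphi(Z)$ in its three components and feeding carefully chosen mutually annihilating pairs into the hypothesis. First I would pin down $\varphi$ on the idempotents and on the diagonal. Since $PQ=QP=0$, the pairs $(P,Q)$ and $(Q,P)$ give $P\varphi(Q)+\varphi(Q)P=0$ and $Q\varphi(P)+\varphi(P)Q=0$; comparing Peirce components and using $2$-torsion-freeness to kill the doubled terms forces $\varphi(P)\in P\T P$ and $\varphi(Q)\in Q\T Q$. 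Likewise, for $T\in P\T P$ and $N\in Q\T Q$ one has $TN=NT=0$, so feeding the pairs $(Q,T)$ and $(P,N)$ and separating components yields the invariance $\varphi(P\T P)\subseteq P\T P$ and $\varphi(Q\T Q)\subseteq Q\T Q$.

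The creative step is the treatment of $P\T Q$. For $M\in P\T Q$ the elements $P+M,\,Q-M$ annihilate each other in both orders, as do $Q+M,\,P-M$. Expanding $(P+M)\varphi(Q-M)+\varphi(Q-M)(P+M)=0$ and its companion, then separating Peirce components (with $2$-torsion-freeness again removing the diagonal pieces), I expect to obtain $\varphi(P\T Q)\subseteq P\T Q$ together with the crucial identities
\[\varphi(M)=\varphi(P)\,M=M\,\varphi(Q)\qquad(M\in P\T Q).\]
To fix the diagonal action I would then use the parametrized families $X=P+M,\ Y=N-MN$ (for $N\in Q\T Q$) and $X=Q+M,\ Y=T-TM$ (for $T\in P\T P$), which are again mutually annihilating; these should produce $M\varphi(N)=\varphi(P)MN$ and $\varphi(T)\,M=\varphi(P)TM$ for every $M$. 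Combining the first with $\varphi(P)M=M\varphi(Q)$ and invoking faithfulness of $\M$ as a right $\Ss$-module gives $\varphi(N)=\varphi(Q)N$; the second, via faithfulness as a left $\R$-module, gives $\varphi(T)=\varphi(P)T$. Setting $c:=\varphi(\textbf{1})=\varphi(P)+\varphi(Q)$ (a diagonal element by the first step), additivity then yields $\varphi(Z)=cZ$ for all $Z\in\T$.

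The last and subtlest point is that $\varphi(x)=\varphi(\textbf{1})x$ defines a centralizer only once $c\in Z(\T)$, so I must upgrade the single identity $\varphi(P)M=M\varphi(Q)$ to genuine centrality. Here I would specialize that identity to $M=rm$ and to $M=ms$ (for $r\in\R$, $s\in\Ss$, $m\in\M$) and chase through the bimodule associativity axioms: for instance $(\varphi(P)r)m=\varphi(P)(rm)=(rm)\varphi(Q)=r(m\varphi(Q))=r(\varphi(P)m)=(r\varphi(P))m$, so left-faithfulness of $\M$ gives $\varphi(P)r=r\varphi(P)$, i.e. $\varphi(P)\in Z(\R)$; a symmetric computation with $M=ms$ and right-faithfulness gives $\varphi(Q)\in Z(\Ss)$. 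Together with $\varphi(P)M=M\varphi(Q)$ this is precisely the statement $c\in Z(\T)$, whence $\varphi$ is a centralizer.

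The main obstacles I anticipate are twofold: (i) discovering the right mutually annihilating families $P\pm M,\ Q\mp M,\ N-MN,\ T-TM$ that force the hypothesis to constrain the off-diagonal and diagonal pieces, and (ii) recognizing that establishing $\varphi(Z)=cZ$ is \emph{not} yet the conclusion. The genuinely ring-theoretic input is the faithfulness of $\M$, which is used both to determine the diagonal action and, through the bimodule axioms, to push $c$ into the centre; overlooking the centrality step would leave the theorem unproved.
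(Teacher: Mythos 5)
Your proposal is correct and follows essentially the same route as the paper: a Peirce decomposition relative to $P$ and $Q$, the same mutually annihilating families (your pairs $(P+M,\,N-MN)$ and $(Q+M,\,T-TM)$ are, up to sign and relabelling, the paper's $(P-PXQ,\,PXQYQ+QYQ)$ and $(PXP-PXPYQ,\,Q+PYQ)$), and the faithfulness of $\M$ to pin down the diagonal action and to push $\varphi(\textbf{1})$ into $Z(\T)$. The only cosmetic difference is that you verify centrality componentwise ($\varphi(P)\in Z(\R)$, $\varphi(Q)\in Z(\Ss)$, $\varphi(P)M=M\varphi(Q)$) whereas the paper checks $X\varphi(\textbf{1})=\varphi(\textbf{1})X$ directly; both amount to the same computation.
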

\begin{proof}
Let $X$ and $Y$ be arbitrary elements in $\T$. Since
$P(QXQ)=(QXQ)P=0$, we have
\begin{equation}\label{c1}
P\varphi(QXQ)+\varphi(QXQ)P=0.
\end{equation}
Multiplying this identity by $P$ both on the left and on the
right we find $2P\varphi(QXQ)P=0$, so
\begin{equation}\label{c2}
P\varphi(QXQ)P=0.
\end{equation}
Now, multiplying \eqref{c1} from the left by $P$, from the right
by $Q$, we get
\begin{equation}\label{c3}
P\varphi(QXQ)Q=0.
\end{equation}
From $Q(PXP)=(PXP)Q=0$, we have
\[ Q\varphi(PXP)+\varphi(PXP)Q=0.\]
By this identity and using similar methods as above we obtain
\begin{equation}\label{c4}
Q\varphi(PXP)Q=0 \quad and \quad P\varphi(PXP)Q=0.
\end{equation}
Since $(P-PXQ)(Q+PXQ)=(Q+PXQ)(P-PXQ)=0$, it follows that
\begin{equation*}\label{c5}
 (P-PXQ)\varphi(Q+PXQ)+\varphi(Q+PXQ)(P-PXQ)=0.
\end{equation*}
Multiplying this identity by $P$ both on the left and on the right
and by the fact that $P\varphi(Q)P=0$, we see that
\begin{equation}\label{c6}
P\varphi(PXQ)P=0.
\end{equation}
from $(PXP-PXPYQ)(Q+PYQ)=(Q+PYQ)(PXP-PXPYQ)=0$, we have
\begin{equation}\label{c7}
 (Q+PYQ)\varphi(PXP-PXPYQ)+\varphi(PXP-PXPYQ)(Q+PYQ)=0.
\end{equation}
Letting $X=P$ and multiplying above identity by $Q$ both on the
left and on the right and by the fact that $Q\varphi(P)Q=0$, we
obtain
\begin{equation}\label{c8}
Q\varphi(PYQ)Q=0.
\end{equation}
Multiplying \eqref{c7} by $P$ on the left and by $Q$ on the right,
from  \eqref{c4}, \eqref{c6} and \eqref{c8} we arrive at
\begin{equation}\label{c9}
P\varphi(PXPYQ)Q=P\varphi(PXP)PYQ.
\end{equation}
Replacing $X$ by $P$ in above equation, we get
\begin{equation}\label{c10}
P\varphi(PYQ)Q=P\varphi(P)PYQ.
\end{equation}
So from \eqref{c9} and \eqref{c10}, it follows that
\[ P\varphi(PXP)PYQ= P\varphi(PXPYQ)Q=P\varphi(P)PXPYQ, \]
and hence $(P\varphi(PXP)P-P\varphi(P)PXP)PYQ=0$. Since $Y\in \T$
is arbitrary and $\M$ is a faithful left $\R$-module, we find
\begin{equation}\label{c11}
P\varphi(PXP)P=P\varphi(P)PXP.
\end{equation}
From $(P-PXQ)(PXQYQ+QYQ)=(PXQYQ+QYQ)(P-PXQ)=0$, we have
\[ (P-PXQ)\varphi(PXQYQ+QYQ)+\varphi(PXQYQ+QYQ)(P-PXQ)=0. \]
Multiplying this identity by $P$ on the left and by $Q$ on the
right, from \eqref{c2}, \eqref{c3}, \eqref{c6} and \eqref{c8} we
see that
\begin{equation}\label{c12}
P\varphi(PXQYQ)Q=PXQ\varphi(QYQ)Q.
\end{equation}
Replacing $Y$ by $Q$ in above equation, we get
\begin{equation}\label{c13}
P\varphi(PXQ)Q=PXQ\varphi(Q)Q.
\end{equation}
By \eqref{c12} and \eqref{c13}, using similar methods as above
and the fact that $\M$ is a faithful right $\Ss$-module, we obtain
\begin{equation}\label{c14}
Q\varphi(QYQ)Q=QYQ\varphi(Q)Q.
\end{equation}
By \eqref{c10} and \eqref{c13}, we have
\begin{equation}\label{c15}
P\varphi(P)PXQ=PXQ\varphi(Q)Q.
\end{equation}
So
\[ P\varphi(P)PXPYQ=PXPYQ\varphi(Q)Q=PXP\varphi(P)PYQ, \]
and hence
\begin{equation}\label{c16}
P\varphi(P)PXP=PXP\varphi(P)P,
\end{equation}
since $\M$ is a faithful left $\R$-module. Similarly from
\eqref{c15}, we get
\begin{equation}\label{c17}
Q\varphi(Q)QXQ=QXQ\varphi(Q)Q.
\end{equation}
Now by \eqref{c2}, \eqref{c3}, \eqref{c4}, \eqref{c15},
\eqref{c16} and \eqref{c17}, we have
\begin{equation}\label{c18}
\begin{split}
X\varphi(\textbf{1})&=PXP\varphi(P)P+PXQ\varphi(Q)Q+QXQ\varphi(Q)Q
\\&=P\varphi(P)PXP+P\varphi(P)PXQ+Q\varphi(Q)QXQ=\varphi(\textbf{1})X,
\end{split}
\end{equation}
and from \eqref{c2}, \eqref{c3}, \eqref{c4}, \eqref{c6},
\eqref{c8}, \eqref{c10}, \eqref{c11}, \eqref{c14} and \eqref{c18},
we arrive at
\begin{equation*}
\begin{split}
\varphi(X)&=P\varphi(PXP)P+P\varphi(PXQ)Q+Q\varphi(QXQ)Q
\\&
=P\varphi(P)PXP+P\varphi(P)PXQ+QXQ\varphi(Q)Q=\varphi(\textbf{1})X.
\end{split}
\end{equation*}
These results show that $\varphi$ is a centralizer.
\end{proof}
Since every Jordan centralizer satisfies the requirements in
Theorem~\ref{c1}, the following corollary is clear.
\begin{cor}\label{co1}
Suppose that $\PH$ is a Jordan centralizer. Then $\varphi$ is a
centralizer.
\end{cor}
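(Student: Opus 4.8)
The plan is to avoid redoing any of the matrix computations and instead reduce the corollary immediately to Theorem~\ref{cc}. Concretely, I would show that the defining identity of a Jordan centralizer forces $\varphi$ to satisfy hypothesis \eqref{t1}, after which the theorem does all the work. So the first step is to recall that a Jordan centralizer obeys
\[
\varphi(XY+YX)=X\varphi(Y)+\varphi(Y)X \qquad (X,Y\in\T).
\]

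The second and essentially only substantive step is the following observation. Suppose $X,Y\in\T$ satisfy $XY=YX=0$. Then $XY+YX=0$, so applying the Jordan centralizer identity and using that $\varphi$ is additive (hence $\varphi(0)=0$) gives
\[
X\varphi(Y)+\varphi(Y)X=\varphi(XY+YX)=\varphi(0)=0.
\]
This is exactly condition \eqref{t1}. Therefore $\varphi$ meets the hypotheses of Theorem~\ref{cc}, and that theorem yields that $\varphi$ is a centralizer.

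There is no real obstacle here: all the difficulty has already been absorbed into Theorem~\ref{cc}, whose proof carries out the delicate idempotent/Peirce-decomposition arguments using the standard idempotents $P,Q$ and the faithfulness of $\M$ as a left $\R$-module and a right $\Ss$-module. The corollary is thus a one-line deduction, and the only point worth stating explicitly is the trivial but crucial fact that the Jordan identity specializes to \eqref{t1} precisely on zero-product pairs.
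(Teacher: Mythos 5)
Your proposal is correct and is exactly the paper's argument: the paper dispatches this corollary with the single remark that every Jordan centralizer satisfies the hypothesis of Theorem~\ref{cc}, which is precisely the observation you spell out (that $XY=YX=0$ gives $X\varphi(Y)+\varphi(Y)X=\varphi(XY+YX)=\varphi(0)=0$). You have merely made explicit the one-line verification the paper leaves implicit.
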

Also from this result we can obtain the following corollary.
\begin{cor}\label{co2}
Let $\PH$ be an additive mapping satisfying the relation
\begin{equation}\label{c18}
\varphi(XYX)=X\varphi(Y)X,
\end{equation}
for all $X,Y\in \T$. Then $\varphi$ is a centralizer.
\end{cor}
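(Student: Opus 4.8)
The plan is to reduce Corollary~\ref{co2} to the already-established Corollary~\ref{co1} by showing that the hypothesis $\varphi(XYX)=X\varphi(Y)X$ forces $\varphi$ to be a Jordan centralizer. First I would polarize the given identity. Substituting $X+Z$ for $X$, expanding $(X+Z)Y(X+Z)=XYX+XYZ+ZYX+ZYZ$, and invoking additivity of $\varphi$ together with the hypothesis applied separately to the pairs $(X,Y)$ and $(Z,Y)$, the ``diagonal'' terms $\varphi(XYX)$ and $\varphi(ZYZ)$ cancel against $X\varphi(Y)X$ and $Z\varphi(Y)Z$, leaving
\[
\varphi(XYZ+ZYX)=X\varphi(Y)Z+Z\varphi(Y)X
\]
for all $X,Y,Z\in\T$.

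Next I would exploit the fact that $\T$ is unital. Setting $Z=\textbf{1}$ in the polarized identity and using $\textbf{1}A=A\textbf{1}=A$ collapses it to
\[
\varphi(XY+YX)=X\varphi(Y)+\varphi(Y)X,
\]
which is precisely the defining relation of a Jordan centralizer. Hence $\varphi$ is a Jordan centralizer, and Corollary~\ref{co1} (itself a consequence of Theorem~\ref{cc}) immediately yields that $\varphi$ is a centralizer. Equivalently, one may bypass Corollary~\ref{co1} and feed the last identity directly into Theorem~\ref{cc}: for any pair with $XY=YX=0$ the left-hand side becomes $\varphi(0)=0$, so $X\varphi(Y)+\varphi(Y)X=0$, which is exactly the zero-product hypothesis \eqref{t1}.

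I do not anticipate a serious obstacle here: the polarization is purely formal and, crucially, requires no division by $2$, so the $2$-torsion freeness of $\T$ is not needed at this stage (it is absorbed into Theorem~\ref{cc} via Corollary~\ref{co1}). The only point deserving care is that the substitution $Z=\textbf{1}$ is legitimate, which it is because $\T$ carries the unity $\textbf{1}$; over a non-unital ring this shortcut would be unavailable, and one would have to work with the polarized identity in its full three-variable form.
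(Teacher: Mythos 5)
Your proposal is correct and follows essentially the same route as the paper: the paper substitutes $X+\textbf{1}$ for $X$ directly (the special case $Z=\textbf{1}$ of your two-step polarization) to obtain $\varphi(XY+YX)=X\varphi(Y)+\varphi(Y)X$ and then invokes Corollary~\ref{co1}. Your added remark that one could feed the resulting identity straight into Theorem~\ref{cc} is a valid minor shortcut but not a different argument.
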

\begin{proof}
Let $X$ and $Y$ be arbitrary elements in $\T$. Replacing $X$ by
$X+\textbf{1}$ in \eqref{c18} we obtain
\[
\varphi((X+\textbf{1})Y(X+\textbf{1}))=(X+\textbf{1})\varphi(Y)(X+\textbf{1}).\]
Hence from hypothesis we find
\[ \varphi(XY+YX)=X\varphi(Y)+\varphi(Y)X.\]
So $\varphi$ is a Jordan centralizer and by Corollary~\ref{co1},
it is a centralizer.
\end{proof}
%_______________________________________________________________________________________________________________________________________________
\section{Characterizing Jordan generalized derivations through zero products}
In this section, we discuss the question of characterizing Jordan
generalized derivations through zero products on triangular
rings. The following is our main result.
 \begin{thm}\label{asli}
 Suppose that $\tau:\T\rightarrow \T$ is an additive map. Then
the following conditions are equivalent:
\begin{enumerate}
\item[(i)] There exist a derivation $\de$ such that $\tau(X) = d(X) +
X\tau(\textbf{1})$ for each $X\in \T$ and $\tau(\textbf{1})\in
Z(\T)$.
\item[(ii)]
\begin{equation*}
 X,Y\in \T, \quad XY=YX=0\Rightarrow X
\tau(Y)+\tau(X)Y+Y\tau(X)+\tau(Y)X=0.
\end{equation*}
\item[(iii)] $ \tau$ is a Jordan generalized derivation via a
Jordan derivation $\delta$.
\item[(iv)] There is an additive map $\del$ such that
\begin{equation*}
 X,Y\in \T, \quad XY=YX=0\Rightarrow X
\delta(Y)+\delta(X)Y+Y\delta(X)+\delta(Y)X=0.
\end{equation*}
and
\begin{equation*}
 X,Y\in \T, \quad XY=YX=0\Rightarrow X
\tau(Y)+\delta(X)Y+Y\delta(X)+\tau(Y)X=0.
\end{equation*}
\end{enumerate}
\end{thm}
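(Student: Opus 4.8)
The plan is to prove the four conditions equivalent through the implications $(i)\Rightarrow(ii)$, $(i)\Rightarrow(iii)$, $(ii)\Rightarrow(iv)$, $(iii)\Rightarrow(iv)$, $(iv)\Rightarrow(i)$, together with the standalone implication $(ii)\Rightarrow(i)$. This makes the directed graph of implications strongly connected, so that each condition is reachable from every other. All of these are short verifications except $(ii)\Rightarrow(i)$, which carries the genuine structural content and where I expect the main difficulty to lie.

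For the routine implications I would proceed as follows. Assuming $(i)$ and setting $\delta:=d$, the fact that $d$ is a derivation together with $\tau(\textbf{1})\in Z(\T)$ lets me expand $\tau(XY+YX)=d(XY+YX)+(XY+YX)\tau(\textbf{1})$ and check that it equals $X\tau(Y)+\delta(X)Y+\tau(Y)X+Y\delta(X)$, using centrality to move $\tau(\textbf{1})$ past the factors; this gives $(iii)$. The same data yield $(ii)$ at once, since for $XY=YX=0$ the terms involving $\tau(\textbf{1})$ drop out while $d(X)Y+Xd(Y)=d(XY)=0$ and $d(Y)X+Yd(X)=d(YX)=0$. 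Next, $(ii)\Rightarrow(iv)$ holds trivially on taking $\delta:=\tau$, and $(iii)\Rightarrow(iv)$ follows by evaluating the defining identity of $\tau$ and the Jordan derivation identity of the same $\delta$ at pairs with $XY=YX=0$, where both left-hand sides vanish.

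The heart of the argument is $(iv)\Rightarrow(i)$, and the decisive step is to subtract its two implications: for $XY=YX=0$ this gives $X\bigl(\tau(Y)-\delta(Y)\bigr)+\bigl(\tau(Y)-\delta(Y)\bigr)X=0$. Hence $\psi:=\tau-\delta$ satisfies exactly the hypothesis of Theorem~\ref{cc}, so $\psi$ is a centralizer and $\psi(X)=X\psi(\textbf{1})$ with $\psi(\textbf{1})\in Z(\T)$. The first implication in $(iv)$ is precisely condition $(ii)$ for the map $\delta$; applying the standalone result $(ii)\Rightarrow(i)$ to $\delta$ produces a derivation $d$ with $\delta(X)=d(X)+X\delta(\textbf{1})$ and $\delta(\textbf{1})\in Z(\T)$. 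Combining the two, $\tau(X)=\delta(X)+\psi(X)=d(X)+X\bigl(\delta(\textbf{1})+\psi(\textbf{1})\bigr)$, and since $\tau(\textbf{1})=\delta(\textbf{1})+\psi(\textbf{1})$ is a sum of central elements, we obtain $\tau(X)=d(X)+X\tau(\textbf{1})$ with $\tau(\textbf{1})\in Z(\T)$, which is $(i)$.

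It remains to establish $(ii)\Rightarrow(i)$, the main obstacle. I would prove it by a Peirce analysis relative to $P$ and $Q$, writing each element as $PXP+PXQ+QXQ$ and feeding into the four-term identity the orthogonal pairs used in the proof of Theorem~\ref{cc}, namely $(P,QXQ)$, $(Q,PXP)$, $(P-PXQ,\,Q+PXQ)$ and their analogues, then isolating components by multiplying on the left and right by $P$ and $Q$. This should successively force the wrong-corner values $Q\delta(P)Q$, $P\delta(Q)P$, $P\delta(QXQ)P$, $Q\delta(PXP)Q$ to vanish, show that $\delta(\textbf{1})$ is central, and verify that $d(X):=\delta(X)-X\delta(\textbf{1})$ preserves the Peirce grading and satisfies $d(XY)=d(X)Y+Xd(Y)$ on each block; the faithfulness of $\M$ as a left $\R$-module and as a right $\Ss$-module then upgrades these block identities to genuine equalities, as in Theorem~\ref{cc}. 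The chief difficulty is that the present identity carries four terms rather than the two of Theorem~\ref{cc}, so the bookkeeping is heavier and the substitutions must be organized carefully in order to pin down both the diagonal and the off-diagonal behaviour of $\delta$ without circularity.
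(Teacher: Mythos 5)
Your global strategy coincides with the paper's: all the weight is placed on $(iv)\Rightarrow(i)$, and there the decisive move --- subtracting the two conditions in $(iv)$ to see that $\psi=\tau-\delta$ satisfies the hypothesis of Theorem~\ref{cc} and is therefore a centralizer, while the first condition in $(iv)$ is handled by establishing the generalized-derivation structure of $\delta$ separately --- is exactly what the paper does. The routine implications and the final assembly $\tau(X)=d(X)+X(\delta(\textbf{1})+\psi(\textbf{1}))$ are also correct.

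The gap is in the one place you defer to a sketch, namely the claim that condition $(ii)$ (applied to $\delta$) forces $d(X):=\delta(X)-X\delta(\textbf{1})$ to ``preserve the Peirce grading.'' That statement is false as it stands: if $\delta$ is the inner derivation $X\mapsto WX-XW$ with $W=PWQ\neq 0$, then $\delta$ satisfies $(ii)$, $\delta(\textbf{1})=0$, $d=\delta$, and $d(PXP)=-PXPWQ\in P\T Q$, so $d$ does not respect the corners. The corner $P\delta(P)Q$ is precisely the component your list of vanishing ``wrong-corner'' values ($Q\delta(P)Q$, $P\delta(Q)P$, etc., which do vanish) cannot reach, and no amount of multiplying the four-term identity by $P$ and $Q$ will kill it. The paper's proof deals with this by first replacing $\delta$ with $\Delta(X)=\delta(X)-WX+XW$ where $W=P\delta(P)Q$; only the corrected map $\Delta$ satisfies $\Delta(P\T P)\subseteq P\T P$, $\Delta(Q\T Q)\subseteq Q\T Q$, $\Delta(P\T Q)\subseteq P\T Q$, after which the block identities (obtained from the pairs you name, plus three-factor substitutions such as $PXPYPZQ$ to transfer the off-diagonal information to the diagonal blocks via faithfulness of $\M$) show that $\Delta(X)-X\Delta(\textbf{1})$ is a derivation, and the inner derivation by $W$ is added back at the end. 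Without this preliminary inner-derivation correction your verification plan stalls at the very first structural step, so the sketch does not yet amount to a proof of the key implication.
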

\begin{proof}
The proof of $(i)\Rightarrow (ii) \Rightarrow (iv)$ and
$(i)\Rightarrow (iii) \Rightarrow (iv)$ is routine.
\par
$(iv)\Rightarrow (i)$: We first show that there exist a
derivation $\de$ such that $\delta(X) = d(X) +
X\delta(\textbf{1})$ for each $X\in \T$ and $\delta(\textbf{1})\in
Z(\T)$.
\par
Let $X$, $Y$ and $Z$ be arbitrary elements in $\T$. Let
$W=P\delta(P)Q$. Define $\Delta:\T\rightarrow \T$ by
$\Delta(X)=\delta(X)-WX+XW$. Then $\Delta$ is an additive mapping
which satisfies
\begin{equation*}
 X,Y\in \T, \quad XY=YX=0\Rightarrow X
\Delta(Y)+\Delta(X)Y+Y\Delta(X)+\Delta(Y)X=0.
\end{equation*}
Moreover $P\Delta(P)Q=0$.
\par
Since $\E(\F \A \F )=(\F \A \F)\E=0$, we have
\begin{equation}\label{1}
\Delta(\E)\F \A \F +\E \Delta(\F \A \F)+\Delta(\F \A \F )\E +\F
\A \F \Delta(\E)=0.
\end{equation}
Multiplying this identity by $\E$ both on the left and on the
right we find $2\E \Delta(\F \A \F)\E=0$ so $\E
\Delta(\F\A\F)\E=0$. Now, multiplying the \eqref{1} from the left
by $\E$, from the right by $\F$ and by the fact that
$\E\Delta(\E)\F=0$, we get $\E\Delta(\F\A\F)\F=0$. Therefore, from
above equations we arrive at
\begin{equation}\label{j2}
\Delta(\F\A\F )=\F\Delta(\F\A\F)\F
\end{equation}
We have $(\E\A\E)\F=\F(\E\A\E)=0$. Thus
\begin{equation}\label{3}
\Delta(\E\A\E)\F+\E\A\E\Delta(\F)+\Delta(\F)(\E\A\E)+\F\Delta(\E\A\E)=0.
\end{equation}
By \eqref{j2}, \eqref{3} and using similar methods as above we
obtain
\begin{equation}\label{j3}
\Delta(\E\A\E)=\E\Delta(\E\A\E)\E.
\end{equation}
We have
$(\E\A\E+\E\A\E\B\F)(\F-\E\B\F)=(\F-\E\B\F)(\E\A\E+\E\A\E\B\F)=0$
and so
\begin{equation}\label{53}
\begin{split}
&\Delta(\E\A\E+\E\A\E\B\F)(\F-\E\B\F)+(\E\A\E+\E\A\E\B\F)\Delta(\F-\E\B\F)\\&+\Delta(\F-\E\B\F)(\E\A\E+\E\A\E\B\F)+(\F-\E\B\F)\Delta(\E\A\E+\E\A\E\B\F)=0.
\end{split}
\end{equation}
Multiplying \eqref{53} by $\E$ both on the left and on the right
and replacing $\A$ by $\E$, from \eqref{j2} we get
$\E\Delta(\E\B\F)\E=0$. Now multiplying \eqref{53} by $\F$ both
on the left and on the right, by \eqref{j3} and a similar
arguments as above we find $\F\Delta(\E\B\F)\F=0$. From previous
equations it follows that
\begin{equation}\label{j4}
\Delta(\E\B\F)=\E\Delta(\E\B\F)\F.
\end{equation}
Multiplying \eqref{53} by $\E$ on the left and by $\F$ on the
right, from \eqref{j2}, \eqref{j3} and \eqref{j4} we obtain
\begin{equation}\label{j5}
\E\Delta(\E\A\E\B\F)\F=\E\A\E\Delta(\E\B\F)\F+\E\Delta(\E\A\E)\E\B\F-\E\A\E\B\F\Delta(\F)\F.
\end{equation}
Replacing $\A$ by $\E$ in above identity, we get
\begin{equation}\label{71}
\E\Delta(\E)\E\B\F=\E\B\F\Delta(\F)\F
\end{equation}
Since
$(\E+\E\A\F)(\F\B\F-\E\A\F\B\F)=(\F\B\F-\E\A\F\B\F)(\E+\E\A\F)=0$,
we have
\begin{equation*}
\begin{split}
&\Delta(\E+\E\A\F)(\F\B\F-\E\A\F\B\F)+(\E+\E\A\F)\Delta(\F\B\F-\E\A\F\B\F)\\&+\Delta(\F\B\F-\E\A\F\B\F)(\E+\E\A\F)+(\F\B\F-\E\A\F\B\F)\Delta(\E+\E\A\F)=0
\end{split}
\end{equation*}
Multiplying this identity by $\E$ on the left and by $\F$ on the
right, from \eqref{j2}, \eqref{j3}, \eqref{j4} and \eqref{71} we
arrive at
\begin{equation}\label{j6}
\E\Delta(\E\A\F\B\F)\F=\E\Delta(\E\A\F)\F\B\F+\E\A\F\Delta(\F\B\F)\F-\E\A\F\Delta(\F)\F\B\F.
\end{equation}
From \eqref{j5} we see that
\begin{equation*}
\begin{split}
\E\Delta(\E\A\E\B\E Z\F)\F&=\E\A\E\B\E\Delta(\E Z\F)\F\\ &
+\E\Delta(\E\A\E\B\E)\E Z\F-\E\A\E\B\E Z\F\Delta(\F)\F.
\end{split}
\end{equation*}
On the other hand,
\begin{equation*}
\begin{split}
\E\Delta(\E\A\E\B\E Z\F)\F&= \E\A\E\Delta(\E\B\E Z\F)\F \\&
+\E\Delta(\E\A\E)\E\B\E Z\F -\E\A\E\B\E Z\F\Delta(\F)\F\\&
=\E\A\E\B\E\Delta(\E Z\F)\F\\&+\E\A\E\Delta(\E\B\E)\E Z\F
-\E\A\E\B\E Z\F \Delta(\F)\F\\&+\E\Delta(\E\A\E)\E\B\E Z\F
-\E\A\E\B\E Z\F\Delta(\F)\F.
\end{split}
\end{equation*}
By comparing the two expressions for $\E\Delta(\E\A\E\B\E Z\F )\E
Z\F $, \eqref{71} and the fact that $\M$ is a faithful left
$\R$-module, yields
\begin{equation}\label{j7}
\E\Delta(\E\A\E\B\E)\E=\E\A\E\Delta(\E\B\E)\E+\E\Delta(\E\A\E)\E\B\E-\E\A\E\Delta(\E)\E\B\E.
\end{equation}
From the fact that $\M$ is a faithful right $\Ss$-module,
\eqref{j6} and a proof similar to above, we find
\begin{equation}\label{j8}
\F\Delta(\F\A\F\B\F)\F=\F\Delta(\F\A\F)\F\B\F+\F\A\F\Delta(\F\B\F)\F-\F\A\F\Delta(\F)\F\B\F.
\end{equation}
By \eqref{71} we have
\begin{equation*}
\begin{split}
&\E\A\E\Delta(\E)\E\B\F
=\E\A\E\B\F\Delta(\F)\F=\E\Delta(\E)\E\A\E\B\F
\\& \textrm{and}
\\&
\E\B\F\Delta(\F)\F\A\F=\E\Delta(\E)\E\B\F\A\F=\E\B\F\A\F\Delta(\F)\F.
\end{split}
\end{equation*}
So by the fact that $\M$ is a faithful left $\R$-module and a
faithful right $\Ss$-module, we have
\begin{equation}\label{101}
\E\A\E\Delta(\E)\E=\E\Delta(\E)\E\A\E, \quad
\F\Delta(\F)\F\A\F=\F\A\F\Delta(\F)\F
\end{equation}
By \eqref{j2} and \eqref{j3}  we have
$\Delta(\textbf{1})=\E\Delta(\E)\E+\F\Delta(\F)\F$. From this
identity and \eqref{71}, \eqref{101} we arrive at
\begin{equation}\label{j9}
\begin{split}
\A\Delta(\textbf{1})&=\E\A\E\Delta(\textbf{1})+\E\A\F\Delta(\textbf{1})+\F\A\F\Delta(\textbf{1})
\\&=\E\A\E\Delta(\E)\E+\E\A\F\Delta(\F)\F+\F\A\F\Delta(\F)\F
\\&=\E\Delta(\E)\E\A\E+\E\Delta(\E)\E\A\F+\F\Delta(\F)\F\A\F
\\&=\Delta(\textbf{1})\E\A\E+\Delta(\textbf{1})\E\A\F+\Delta(\textbf{1})\F\A\F
\\&=\Delta(\textbf{1})\A.
\end{split}
\end{equation}
\par We have $\delta(\textbf{1})=\Delta(\textbf{1})$ and hence from \eqref{j9} we find that $\delta(\textbf{1})\in Z(\mathcal{M})$. Since
$\A\B=\E\A\E\B\E+\E\A\E\B\F+\E\A\F\B\F+\F\A\F\B\F$ for any $\A,\B
\in \T$, by \eqref{j2}, \eqref{j3}, \eqref{j4},\eqref{j5},
\eqref{j6}, \eqref{j7}, \eqref{j8} and \eqref{j9}, it follows that
the mapping $\Delta^{\prime}:\T\rightarrow \T$ given by
$\Delta^{\prime}(\A)=\Delta(\A)-\A\Delta(\textbf{1})$ is a
derivation. So the mapping $\de$ given by
$d(\A)=\Delta^{\prime}(\A)+(WX-XW)$ is a derivation and we have
$\delta(\A)=d(\A)+\A \delta(\textbf{1})$ for all $\A \in \T$.
\par
Now define $\PH$ by $\varphi=\tau-\delta $. By hypothesis
$\varphi$ is an additive map satisfying
\[ X,Y\in \T, \quad XY=YX=0\Rightarrow X \varphi(Y)+\varphi(Y)X=0.\]
From Theorem~\ref{c1}, $\varphi$ is a centralizer. From above
results we have
\[ \tau(X)=\delta(X)+\varphi(X)=d(X)+X\delta(\textbf{1})+X\varphi(\textbf{1})=d(X)+X\tau(\textbf{1}) \,\,\, X\in \T,\]
where $\de$ is a derivation. Since $\varphi(\textbf{1}), \delta(
\textbf{1})\in Z(\T) $, it follows that
$\tau(\textbf{1})=\varphi(\textbf{1})+ \delta( \textbf{1})\in
Z(\T)$. The proof is now complete.
\end{proof}
Now, we can give an example which shows that generalized
derivations are not necessarily Jordan generalized derivations.
\begin{ex}\label{ee}
Suppose that $0 \neq m\in \M$ is an arbitrary element and
$X=\begin{pmatrix}
  0 & m \\
  0 & 0
\end{pmatrix}\in \T$. Define an additive map $\delta:\T \rightarrow
\T$ by $\delta(T)=TX$. Since $\delta( \textbf{1})=X$ is not in
$Z(\T)$, by Theorem~\ref{asli}, $\delta$ is not a Jordan
generalized derivation, while by a straightforward calculation one
can prove that $\delta$ is a generalized derivation.
\end{ex}
If $\del$ is a Jordan derivation, then $\delta$ satisfies the
requirements in Theorem~\ref{asli}(iii) and
$\delta(\textbf{1})=0$. So $\delta$ is a derivation and hence
Theorem~\ref{asli} generalizes the main result of \cite{Zh2}.
\par
Given a ring $\mathcal{A}$ and an $\mathcal{A}$-bimodule
$\mathcal{M}$, the \emph{trivial extension} of $\mathcal{A}$ by
$\mathcal{M}$ is the ring
$T(\mathcal{A},\mathcal{M})=\mathcal{A}\oplus\mathcal{M}$ with
the usual addition and the following multiplication:
\[ (a_{1},m_{1})(a_{2},m_{2})=(a_{1}a_{2}, a_{1}m_{2}+m_{1}a_{2}).
\]
Let $\mathcal{A}$ be a $2$-torsion free unital ring. Suppose that
each additive mapping $\delta:\mathcal{A}\rightarrow \mathcal{A}$
satisfying
\[  a,b\in \mathcal{A}, \quad ab=ba=0\Rightarrow a
\delta(b)+\delta(a)b+b\delta(a)+\delta(b)a=0,\] is a generalized
derivation with $\delta(1)\in Z(\mathcal{A})$. Let
$T(\mathcal{A},\mathcal{A})$ be the trivial extension of
$\mathcal{A}$ by $\mathcal{A}$. Then by \cite[Lemma 4.3.]{Gh}
every Jordan derivation from $T(\mathcal{A},\mathcal{A})$ into
itself is a derivation. So from this observation and
Theorem~\ref{asli} we have the next corollary.
\begin{cor}\label{tra}
Every Jordan derivation from $T(\T,\T)$ into itself is a
derivation.
\end{cor}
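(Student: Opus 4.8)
The plan is to deduce Corollary~\ref{tra} directly from Theorem~\ref{asli} by verifying the hypothesis of the cited result \cite[Lemma 4.3.]{Gh}. That lemma says: if $\mathcal{A}$ is a $2$-torsion free unital ring with the property that every additive map $\delta:\mathcal{A}\rightarrow\mathcal{A}$ satisfying the zero-product condition $ab=ba=0\Rightarrow a\delta(b)+\delta(a)b+b\delta(a)+\delta(b)a=0$ is automatically a generalized derivation with $\delta(1)\in Z(\mathcal{A})$, then every Jordan derivation of the trivial extension $T(\mathcal{A},\mathcal{A})$ is a derivation. So the whole argument reduces to confirming that the triangular ring $\T$ satisfies the stated hypothesis on $\mathcal{A}$, and then invoking the lemma with $\mathcal{A}=\T$.

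First I would recall that $\T=Tri(\R,\M,\Ss)$ is a $2$-torsion free unital ring, which is exactly what the lemma requires of $\mathcal{A}$. Next I would fix an arbitrary additive map $\delta:\T\rightarrow\T$ satisfying the single zero-product identity $X,Y\in\T,\ XY=YX=0\Rightarrow X\delta(Y)+\delta(X)Y+Y\delta(X)+\delta(Y)X=0$. The key observation is that taking $\tau=\delta$ in Theorem~\ref{asli} makes condition~(iv) hold: the map $\delta$ serves as its own auxiliary map, so both displayed implications in (iv) coincide with the hypothesis I just assumed. Therefore Theorem~\ref{asli} applies and yields (i), namely that there is a derivation $d:\T\rightarrow\T$ with $\delta(X)=d(X)+X\delta(\textbf{1})$ for all $X\in\T$ and $\delta(\textbf{1})\in Z(\T)$.

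It then remains to note that a map of this form is precisely a generalized derivation. Indeed, writing $\delta(X)=d(X)+X\delta(\textbf{1})$ with $d$ a derivation and $c:=\delta(\textbf{1})\in Z(\T)$, one checks the generalized-derivation identity $\delta(XY)=\delta(X)Y+X\delta(Y)-X\delta(\textbf{1})Y$ by a direct expansion using $d(XY)=d(X)Y+Xd(Y)$ and the centrality of $c$; this is routine and I would not grind through it. Hence $\delta$ is a generalized derivation with $\delta(\textbf{1})\in Z(\T)$, so $\T$ satisfies the hypothesis of \cite[Lemma 4.3.]{Gh} with $\mathcal{A}=\T$. Applying that lemma gives that every Jordan derivation of $T(\T,\T)$ is a derivation, completing the proof.

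The bulk of the work is already carried by Theorem~\ref{asli}, so there is no serious obstacle here; the only point needing care is the bookkeeping in matching the single zero-product condition assumed on $\delta$ with condition~(iv) of Theorem~\ref{asli} under the specialization $\tau=\delta$, and confirming that the ring-theoretic hypotheses ($2$-torsion freeness, unitality) of \cite[Lemma 4.3.]{Gh} transfer verbatim to $\T$.
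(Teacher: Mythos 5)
Your proposal is correct and follows exactly the paper's route: the paper likewise verifies the hypothesis of \cite[Lemma 4.3.]{Gh} for $\mathcal{A}=\T$ by applying Theorem~\ref{asli} with $\tau=\delta$ (condition (iv) with $\delta$ as its own auxiliary map) to conclude that $\delta$ is a generalized derivation with $\delta(\textbf{1})\in Z(\T)$, and then invokes the lemma. No gaps.
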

% ------------------------------------------------------------------------
%GATHER{Xbib.bib}   % For Gather Purpose Only
%GATHER{Paper.bbl}  % For Gather Purpose Only
\bibliographystyle{amsplain}
\bibliography{xbib}

\end{document}